\date{27 AUG 2022}
\title{Seiberg--Witten monopoles and flat $\PSL(2,\R)$-connections}
\author{Andriy Haydys}
\begin{document}

\maketitle

\begin{abstract}
	I show that flat $\PSL(2;\R)$-connections on three-manifolds satisfying certain `stability condition' can be interpreted as solutions of the Seiberg--Witten equations with two spinors. 
	This is used to construct explicit examples of the Seiberg--Witten moduli spaces.
	Also, I show that  in this setting blow up sets satisfy certain non-trivial topological restrictions. 
\end{abstract}

\section{Introduction}

%The boundary of the moduli space of the Seiberg--Witten equations with multiple spinors is supposed to play an important r\^ole in the Donaldson--Segal program~\cite{DonaldsonSegal:09,Haydys19_G2InstantSWmonopoles} of defining enumerative invariants of compact $\rG_2$ manifolds by means of $\rG_2$ instantons.  
Studies of the boundary points of the moduli space of solutions to the Seiberg--Witten equations with multiple spinors   are concerned with some interesting phenomena, which are new and of importance beyond the Seiberg--Witten theory.
Similar phenomena occur for example in the studies of the Vafa--Witten, Kapustin--Witten, complex anti-self-duality, Hermitian Yang--Mills, $\rG_2$- and $\Spin(7)$-instanton equations and many other interesting geometric PDEs.

To set the stage, let $M$ be a closed oriented Riemannian three-manifold. 
Pick a spin structure and denote by $\slS$ the corresponding spinor bundle. 
Recall that $\slS$ is a Hermitian rank $2$ bundle such that $\End_0 (\slS)$ is isomorphic to $T^*_\C M = T^*M\otimes\C$, where the subscript $0$ indicates the subbundle of trace-free endomorphisms. 

Let $E$ be any fixed Hermitian vector bundle of rank $2$ such that $\Lambda^2 E$ is trivial, so that the structure group of $E$ is $\SU(2)$.
Fix also an $\SU(2)$-connection $b$ on $E$.

For a Hermitian line bundle $\cL$ the bundle $\Hom\bigl ( E; \slS\otimes \cL \bigr )$ will be referred to as the twisted spinor bundle. 
If $\Psi$ is a section of the twisted spinor bundle, then 
\begin{equation}
	\label{Eq_Mu}
	\mu(\Psi) = \Psi\Psi^* - \frac 12 |\Psi|^2
\end{equation}
is a trace-free Hermitian endomorphism of $\slS$ (the twist by $\cL$ is immaterial here).
Using the isomorphisms $\End_0 (\slS)\cong T^*_\C M\cong \Lambda^2 T^*_\C M$, $\mu(\Psi)$  can be identified with a purely imaginary 2-form on $M$. 
With this at hand, the Seiberg--Witten equations with two spinors read
\begin{equation}
	\label{Eq_SW2gen}
	\slD_{a\otimes b}\Psi =0\qandq\quad F_a=\mu(\Psi),
\end{equation} 
where $a$ is a Hermitian connection on $\cL$, see~\cite{HaydysWalpuski15_CompThm_GAFA} for more details.
Let me just point out that while \eqref{Eq_SW2gen} looks just like the classical Seiberg--Witten equations, an essential difference lies in the structure of the quadratic map $\mu$, which  in a local trivialization of $E$ can be written as  follows
\[
(\psi_1,\psi_2)\mapsto \psi_1\psi_1^* - \frac 12 |\psi_1|^2 \ + \ \psi_2\psi_2^* - \frac 12 |\psi_2|^2.
\]
In particular, $\mu$ is no longer proper, which, in some sense, is a source of  potential non-compactness of the moduli space of solutions. 
The failure of the compactness is discussed in detail in~\cite{HaydysWalpuski15_CompThm_GAFA}. 
In fact, I construct below fairly explicit examples of such  moduli spaces, in particular some non-compact ones, see the discussion at the end of Section~\ref{Sect_Involutions}.

According to~\cite{HaydysWalpuski15_CompThm_GAFA}, if $(a_k, \Psi_k)$ is a non-convergent sequence of solutions of~\eqref{Eq_SW2gen}, then a subsequence of $\big (a_k, \|\Psi_k \|_{L^2}^{-1}\Psi_k \big )$  converges to some $(a,\Psi)$ over $M\setminus Z$, where $Z$ is a closed subset of $M$ of Hausdorff dimension at most one and $(a,\Psi)$ satisfies
\begin{equation}
	\label{Eq_SW2genLim}
	\slD_{a\otimes b}\Psi =0\qandq \mu(\Psi) =0\qquad\text{ over } M\setminus Z.
\end{equation}
Moreover, the pointwise norm of $\Psi$ extends as a continuous function to all of $M$,  $|\Psi|^{-1}(0) = Z$, $\|\Psi\|_{L^2(M\setminus Z)} =1$, and $a$ is flat with the monodromy in $\{ \pm 1\}$.  
Thus, in a certain sense solutions of~\eqref{Eq_SW2genLim} describe the topological boundary of the moduli space of solutions of the Seiberg--Witten equations with two spinors.
Notice that a solution of ~\eqref{Eq_SW2genLim} does not need to be defined along $Z$. 

It follows from the proof of~\cite{HaydysWalpuski15_CompThm_GAFA}*{Thm.\,1.1}  that if the sequence $\big (a_k,\|\Psi_k \|^{-1}\Psi_k\big )$ converges to $(a,\Psi)$ in the sense described above, then the set
\begin{equation*}
%\label{Eq_CurvBlowUp}
\Bigl\{ m\in M \ \ \big |\ \  \exists r_k\to 0\ \text{ s.t. }\ r_k\int_{B_{r_k}(m)} |F_{A_k}|^2  \to\infty \Bigr \}
\end{equation*}
is contained in $Z$, where $B_r(m)$ is the geodesic ball of radius $r$ centered at $m$.
This motivates the following.
\begin{defn}[\cite{Haydys19_InfinitMultipl_AiM}]
	\label{Defn_BlowUpSet}
	A closed nowhere dense set $Z\subset M$ is called a blow-up set for the Seiberg--Witten equation with two spinors, if there is a  solution $(a, \Psi)$ of~\eqref{Eq_SW2genLim} defined over $M\setminus Z$ such that the following holds:
	\begin{enumerate}[(i)]
		\item
		\label{It_HoelderCont}
		$|\Psi|$ extends as a H\"older-continuous function to all of $M$ and $Z=|\Psi|^{-1}(0)$;
		\item 
		\label{It_NablaPsi}
		$\int_{M\setminus Z} |\nabla^A\Psi|^2<\infty$.
	\end{enumerate}  
\end{defn}

Condition~\ref{It_NablaPsi} in the above definition is of technical nature and will not be used directly in the discussion below. 

It is easy to see that if the determinant line bundle $\cL^2$ is non-trivial, then $Z\neq \varnothing$. 
Indeed, assume that for a non-trivial $\cL^2$ there is a solution $(A,\Psi)$ of~\eqref{Eq_SW2genLim} such that $\Psi$ vanishes nowhere, i.e., $Z=\varnothing$. 
The equation $\mu(\Psi)=\Psi\Psi^*-\tfrac 12 |\Psi|^2=0$ implies that $\ker\Psi^*=\{ 0 \}$ pointwise.
Hence, $\Psi$ is surjective everywhere, which in turn yields that $\Psi$ is an isomorphism, because $\rk E = \rk \big (\slS\otimes \cL \big )$. 
Therefore, $\Lambda^2 E\cong \Lambda^2(\slS\otimes\cL)\cong \cL^2$, which yields that $\cL^2$ is trivial thus providing a contradiction.

Thus, in order to construct a compactification of the moduli space of the Seiberg--Witten monopoles with two spinors one needs to understand properties of blow up sets $Z$. 
In particular, one can ask whether there are any other restrictions on $Z$ apart from being closed and of Hausdorff dimension at most one.

A topological restriction for $Z$ has been established in~\cite{Haydys19_InfinitMultipl_AiM}. 
Namely, it has been shown that $Z$ supports a homology class, which is  Poincar\'e dual to $c_1(\cL^2)$. 

In this manuscript a  topological restriction of another type is obtained.
Before explaining this, let me note that in general $Z$ does not need to be  smooth, see however~\cite{Zhang17_Rectifiability_Arx} for the most general regularity statement currently known.
Recent results by Taubes--Wu~\cite{TaubesWu20_SingularityModels_Arx} yield a strong evidence that $Z$ may be singular in general. 
Nevertheless, it is conceivable that after a suitable perturbation (of the background metric, say) the blow up set will become a smoothly embedded $1$-submanifold, i.e., a link in $M$. 
In any case, to formulate the next result, let me \emph{assume} that $Z$ is a link indeed.

Thus, put $E=\slS$ with $b$ being the Levi--Civita connection and assume that  $Z$ is a link in $M$. 
 Denote by  $Z_1$  the union of all components of $Z$ such that the monodromy of $a$ along the meridian is non-trivial.

  \begin{thm}
  	\label{Thm_AlexPolyBlowUpSet}
  	Let $(a,\Psi, Z)$ be a solution of~\eqref{Eq_SW2genLim}  on an integral homology sphere $M$ with $E=\slS$ and $b$ being the Levi--Civita connection.
  	If $Z$ is smooth and $\Delta_{Z_1} (t)$ denotes the  Alexander polynomial of $Z_1$, then $\Delta_{Z_1}(-1)=0$. 
  	In particular, $Z$ consists of at least two connected components if it is smooth.
  \end{thm}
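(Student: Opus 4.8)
The plan is to pass to the double branched cover $M_2$, produce there a nonzero class in $H^1(M_2;\R)$ out of the flat $\PSL(2;\R)$-connection, translate the resulting positivity of $b_1(M_2)$ into the vanishing $\Delta_{Z_1}(-1)=0$ via the classical relation between the Alexander polynomial and the homology of a double branched cover, and finally deduce disconnectedness from the parity of the knot determinant.

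First I would pass to $M_2$. By \autoref{Prop_DoubleCover} the holonomy of the flat $\PSL(2;\R)$-connection $A$ around a meridian of each component of $Z_1$ lies in the unique conjugacy class of order-two elements of $\PSL(2;\R)$, while it is trivial around the remaining components of $Z$. Hence $A$ already extends across $Z\setminus Z_1$, and the squared meridians of $Z_1$ carry trivial holonomy; therefore the pullback $\pi^{*}A$ extends across $\hat Z_1=\pi^{-1}(Z_1)$ to a genuine flat $\PSL(2;\R)$-connection $\hat A$ on all of $M_2$. By construction $\hat A$ is equivariant with respect to the covering involution $\tau$, which acts on the fibres by conjugation by the order-two element.

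The crux is to extract a nonzero real cohomology class of $M_2$ from $\hat A$, and this is the step I expect to be the main obstacle. The idea is that the limiting equation \eqref{Eq_SW2genLim} with $E=\slS$ forces a reduction of structure group: $\mu(\Psi)=0$ makes $\Psi$ a conformal isomorphism $\slS\to\slS\otimes\cL$ over $M\setminus Z$, and together with the flatness of $A$ this should produce an $\hat A$-parallel section $s$ of the adjoint bundle $\mathrm{ad}\,\hat A\cong\mathfrak{sl}(2;\R)$ on $M_2$. The holonomy of $\hat A$ then lies in the centraliser of $s$, a one-dimensional abelian subgroup. The role of the stability hypothesis of \autoref{Def_FlatStablePSL2R} is precisely to guarantee that this holonomy is not contained in a compact subgroup; consequently $s$ is hyperbolic (or nilpotent) and the centraliser is a conjugate of the diagonal subgroup, isomorphic to the noncompact group $\R_{>0}$. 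Taking the logarithm of the corresponding multiplier yields a closed real $1$-form $\alpha$ whose class $[\alpha]\in H^1(M_2;\R)$ is nonzero, again by stability. In particular $b_1(M_2)>0$, so the integral first homology $H_1(M_2)$ is infinite. Making the passage from $\mu(\Psi)=0$ to the parallel section $s$ rigorous, and pinning down how stability excludes the elliptic (compact) case, is where the real work lies.

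Finally I would invoke the classical theorem of Fox: for a link $L$ in an integral homology sphere the integral homology of the double branched cover is finite if and only if $\Delta_L(-1)\neq 0$, in which case its order equals $|\Delta_L(-1)|$. Since $b_1(M_2)>0$ forces $H_1(M_2)$ to be infinite, we obtain $\Delta_{Z_1}(-1)=0$. For the last assertion, recall that the determinant $|\Delta_K(-1)|$ of a knot $K$ is always odd, hence nonzero; thus $Z_1$ cannot be connected. As $Z_1$ is a union of components of $Z$, the link $Z$ is disconnected, which proves \ref{Thm_AlexPolyBlowUpSet}.
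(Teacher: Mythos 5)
Your overall route coincides with the paper's: the paper's proof of \autoref{Thm_AlexPolyBlowUpSet} is precisely the combination of \autoref{Prop_DoubleCover}\,\textit{(iv)} (positivity of $b_1(M_2)$) with the classical fact that $\Delta_{Z_1}(-1)\neq 0$ would force the double branched cover to be a rational homology sphere, plus the fact that for a knot the double branched cover is \emph{always} a rational homology sphere (equivalently, the knot determinant is odd). Your first and last paragraphs reproduce this faithfully; indeed, since you cite \autoref{Prop_DoubleCover} anyway, simply quoting its part \textit{(iv)} would have completed the proof.

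The genuine gap is exactly the step you flag yourself: producing the nonzero class in $H^1(M_2;\R)$, i.e.\ reproving parts \textit{(ii)} and \textit{(iv)} of \autoref{Prop_DoubleCover}, and the sketch you give would not work as stated. First, the mechanism is misattributed: no ``stability excludes the elliptic case'' argument is needed, because $b$ is the $\fu(1)^\perp i$-part of $A$, so after trivializing $\pi^*\cI$ the pulled-back form $\pi^*b=\om\,\xi$ automatically takes values in the noncompact directions $\mathrm{span}\{\xi_2,\xi_3\}$; moreover the parallel direction $\xi$ in the adjoint bundle comes from $a$ preserving $\cI=\Im b$, not from $\mu(\Psi)=0$ making $\Psi$ a ``conformal isomorphism'' (on $Z$ itself $\Psi$ vanishes, so there is no isomorphism to speak of). Second, and more seriously, ``taking the logarithm of the multiplier yields a closed real $1$-form $\alpha$ whose class is nonzero, again by stability'' is unsupported: a closed $1$-form can perfectly well be exact, giving trivial holonomy, and stability of the original connection on $M$ does not formally preclude this on $M_2$. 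The paper's actual argument here is analytic, not algebraic: one equips $M_2$ with a modified smooth structure in which the covering map is only Lipschitz along $\hat Z_1$ but $\pi^*g$ has bounded coefficients, observes that $\om$ is a nonzero $L^2$ form which is closed and co-closed away from $\hat Z_1$, and invokes the $L^2$ Hodge theorem for Lipschitz Riemannian structures (Teleman, Wang) giving $\cH^2_1\cong H^1(M_2;\R)$, whence $[\om]\neq 0$. Nothing in your sketch substitutes for this input; relatedly, your claim that $\pi^*A$ extends as a genuine flat connection over all of $M_2$ glosses over the fact that $b$ merely vanishes continuously along $Z$, so the pulled-back data are only of low regularity near $\hat Z_1$ --- which is precisely why the singular Hodge theory is required.
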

 
 The proof of this theorem can be found on Page~\pageref{Pf_AlexPoly}. 
  Concerning the very last claim about the disconnectedness of $Z$, it is well-known that the  Alexander polynomial of a 1-component link does not vanish at the point $-1$, see for example~\cite{Lickorish97_IntroKnotThry}*{Cor.\,6.11} in the case $M=S^3$. 
  In any case, this will be also clear from the proof of  \autoref{Thm_AlexPolyBlowUpSet}.

 While a very particular choice of the twist is required for the proof of \autoref{Thm_AlexPolyBlowUpSet}, the topological restrictions obtained are  stable under small deformations. 
 %Observe first that since the natural scaling $(a,\Psi, Z)\mapsto (a, \lambda\, \Psi, Z)$, $\lambda\in \R$ preserves equations, the  moduli space of solutions of~\eqref{Eq_SW2genLim} is at least of dimension $1$ in a neighborhood of an irreducible solution whenever non-empty.
 To explain, notice first that solutions of~\eqref{Eq_SW2genLim} correspond~\cite{HaydysWalpuski15_CompThm_GAFA}*{App.\,A} to certain $\Z/2$ harmonic 1-forms~\cite{Taubes13_PSL2Ccmpt, Taubes15_CorrigendumToPSL2C}, for which $Z$ is a part of the data. 
 In any case, if $Z$ is an embedded link, then the space of all $\Z/2$ harmonic 1-forms in some neighborhood is cut out by a Fredholm map~\cite{Takahashi15_Z2HarmSpinors_Arx}.
 Hence, it is reasonable to expect that if $(a, \Psi, Z)$ is a non-trivial solution of~\eqref{Eq_SW2genLim} for $(g,b) = \big (g,\nabla^{LC}\big )$, then for any other choice of parameters $(\hat g, \hat b)$, which are sufficiently close to $\big (g,\nabla^{LC}\big)$ and admit a non-trivial solution $(\hat a, \hat\Psi,\hat Z)$,  the set $\hat Z$ is also an embedded link, which is isotopic to $Z$.
 In this case, $\Delta_{\hat Z_1}(-1)$ must vanish too.   
 In this sense, the conclusion of \autoref{Thm_AlexPolyBlowUpSet} is a manifestation of generic properties of blow up sets. 
  
 \medskip
 
 A crucial step in the proof of  Theorem~\ref{Thm_AlexPolyBlowUpSet} is a correspondence between solutions of~\eqref{Eq_SW2gen} and \eqref{Eq_SW2genLim} with flat $\PSL(2,\R)$-connections on $M$ and $M\setminus Z$ respectively satisfying certain stability condition, see Definition~\ref{Def_FlatStablePSL2R} and Lemmas~\ref{Lem_FlatPSL2RRedSW} and~\ref{Lem_SW2PSL2RBijection} below.  
 Combining this with known topological and analytic results, the proof of the above theorem follows quite easily.
 
 \medskip
 
 \textsc{Acknowledgment.} I am grateful to the Simons Foundation for the financial support and Stanford University for hospitality, where part of this project has been carried out. 
 Also, I wish to thank an anonymous referee for helpful questions and comments.

\section{Basic constructions}
\label{Sect_BasicConstr}

To fix notations, for a principal $\rG$-bundle $P\to M$, denote by $\cA(P)$ the space of all connections on $P$ and recall that $\cA(P)$ is an affine space modeled on $\Om^1(ad\, P)$, where $ad\, P:=P\times_{\rG, ad}\frak g$ with $\fg$ being the Lie algebra of $\rG$.
The gauge group 
\begin{equation*}
	\cG(P)=\big \{  \psi\colon P\to P\mid \pi\comp\psi = \pi,\quad \psi(p\cdot g) = \psi(p)\cdot g \big\}
\end{equation*}
 acts naturally on $\cA(P)$  by pull-backs.

Recall also that the group $\PSL(2,\C) = \SL(2,\C)/\pm 1$ acts transitively on the hyperbolic space
\begin{equation*}
	\mathbb H^3:= \big \{ H\in M_2(\C)\mid H^* = H,\ \det H =1 \big \}\qquad\text{by}\qquad g\cdot H= gHg^*.
\end{equation*}
Since the stabilizer of the identity matrix is $\PSU(2)=\SU(2)/\pm 1\cong \SO(3)$, we have $\mathbb H^3=\PSL(2,\C)/\PSU(2)$. 

\medskip

Let $\pi\colon Q^c\to M$ be a principal $\PSL(2,\C)$-bundle, where $M$ is a closed oriented Riemannian three manifold just as in the preceding section. 
Choose a reduction of the structure group to $\PSU(2)$, that is a $\PSU(2)$-subbundle $Q\subset Q^c$. 
Recall that such reduction corresponds to a section $s$ of the bundle
\begin{equation*}
	Q^c \times_{\PSL(2,\C)} \PSL(2,\C)/\PSU(2)=Q^c\times_{\PSL(2,\C)} \mathbb H^3
\end{equation*} 
or, equivalently, an equivariant map $\hat s\colon Q^c\to \mathbb H^3$. 
Explicitly,  $\hat s$ and $Q$ are related by
$Q=\big \{ q\in Q^c\mid \hat s(q) = \mathbbm 1  \big \}$.
Notice that $Q^c$ can be recovered from $Q$, since  
 $Q^c=Q\times_{\mathrm{\PSU}(2)}\PSL(2,\C)$.

For any connection $A$ on $Q^c$, which is an equivariant 1-form on $Q^c$ with values in $\fpsl (2,\C)$, write $A=a + (\pi^*b)i$, where $a$ is a connection on $Q$ while $b$ is a 1-form on $M$ with values in $\ad Q$.   
Then  $0=F_A= F_a + (d_a b)i -\frac 12 [b\wedge b]$, that is $A$ is flat if and only if 
\begin{equation}
 \label{Eq_FlatPSL2C}
d_a b=0\quad\text{and}\quad F_a=\frac 12 [b\wedge b],
\end{equation}
cf.~\cite{Hitchin:87}.
Notice that the space of solutions of these equations is invariant under the action of $\cG(Q^c)$, which is referred to as \emph{the complex gauge group} below.

\begin{remark}
Sometimes I identify  $b\in \Om^{1}(\ad Q)$ with $\pi^*b\in \Om^1(Q;\su(2))$ dropping the pull-back from the notations if this is unlikely to lead to a confusion.   
\end{remark}

\begin{defn}
	I  say that $A=a + (\pi^*b)\, i$ is a \emph{stable flat $\PSL(2,\C)$-connection}, if  $A$ is flat and the following condition holds: 
	\begin{equation}
	\label{Eq_Stability}
	d_a^* b = -*d_a\! * b=0.
	\end{equation}	
\end{defn}
 
 	The above stability condition has been studied at least  starting from~\cite{Hitchin:87, Donaldson:87_TwistedHarmonicMaps, Corlette:88_FlatGbundles} and  can be understood as follows.
 	By writing connections on $Q^c$ as pairs just like above, we have an isomorphisms $\cA(Q^c)\cong \cA(Q)\times \Om^1(\mathrm{ad}\, Q)\cong T^*\cA(Q)$.
 	In particular, $\cA(Q^c)$ has a natural symplectic structure.  
 	The gauge group $\cG(Q)$, which is referred to as a `real gauge group' in the sequel, acts in a Hamiltonian fashion, and the corresponding moment map can be identified with 
 	\[
 	\cA(Q)\times \Om^1(\mathrm{ad}\, Q)\to \mathrm{Lie}\bigl (\cG(Q)\bigr )\cong \Omega^0(\mathrm{ad}\, Q),\qquad 
 	(a,b)\mapsto d_a^* b.
 	\]
 	Hence, \eqref{Eq_Stability} demands that $(a,b)$ lies in the zero locus of this moment map.
 	 This is the familiar `stability condition' from  algebraic/symplectic geometry.
 	 
 	 Notice also that~\eqref{Eq_Stability} is  preserved by \emph{the real gauge group} $\cG(Q)$, but not by the complex one.

\begin{defn}
	\label{Def_FlatStablePSL2R}
	I say that a pair $(A, Q^r)$ is a \emph{flat stable $\PSL(2,\R)$-connection}, if the following holds: 
	\begin{itemize}[itemsep=0pt]
		\item $A=a+(\pi^*b)\,i$ is a solution of~\eqref{Eq_FlatPSL2C} and \eqref{Eq_Stability};
		\item $Q^r\subset Q^c$ is a $\PSL(2,\R)$-subbundle  such that $A$ reduces to $Q^r$, i.e., the restriction of $A$ to $Q^r$ takes values in $\fpsl(2,\R)$.
	\end{itemize}
\end{defn}

\begin{remark}
	\label{Rem_UniquenessPSL2Rsubbundle}
	For any $\PSL(2,\C)$-connection $A$ and any $q_0\in Q^c$, let $\Hol(A, q_0)$ be the holonomy group of $A$ relative to $q_0$.
	Let $Q_{\mathrm{hol}}(A,q_0)$ denote the holonomy bundle of $A$, 
	that is $Q_{\mathrm{hol}}(A,q_0)$ consists of all those $q\in Q^c$ which can be connected with $q_0$ by a horizontal curve.
	Then $Q_{\mathrm{hol}}(A,q_0)$ is a principal $\Hol(A,q_0)$-bundle and it is well known that $A$ reduces to $Q_{\mathrm{hol}}(A,q_0)$, see for example~\cite{Nomizu55_ReductionThm}*{Prop.\,2}.
	Then for any $g\in \PSL(2,\C)$ we have
	\begin{equation*}
	\Hol(A, q_0 g) =g^{-1} \Hol(A, q_0) g\qquad\text{and}\qquad 
	Q_{\mathrm{hol}}(A,q_0g) = Q_{\mathrm{hol}}(A,q_0)g.
	\end{equation*}
	
	If $\Hol (A, q_0)\subset \PSL(2;\R)$, then there exists a unique  $\PSL(2,\R)$-bundle $Q^r(A, q_0)\supset \Hol(A,q_0)$ such that $A$ restricts to $Q^r(A, q_0)$. 
	Any other choice of the basepoint yields
	\begin{equation*}
	Q_{\mathrm{hol}}(A,q_0\cdot g) = Q_{\mathrm{hol}}(A,q_0)\cdot g\qquad\implies\qquad 
	Q^r(A,q_0\cdot g) = Q^r(A,q_0)\cdot g,
	\end{equation*} 
	where $g\in \PSL(2, \C)$.
	
	Hence, if $g_0$ lies in the normalizer of $\PSL(2,\R)$ and $A$ reduces to $Q^r$, then $A$ also reduces to $Q^r\cdot g_0$. 
	Notice that in this case $\Hol(A,q_0)$ and $\Hol(A,q_0\cdot g_0)$ are related by an outer automorphism of $\PSL(2,\R)$. 
	I shall come back to this point in Section~\ref{Sect_Involutions} again. 
\end{remark}

Notice that in the above definition I fix the standard embedding $\PSL(2,\R)\subset \PSL(2,\C)$. 
Since $\PSL(2,\R)\cap \PSU(2)=\U(1)$,  this yields a distinguished copy of $\U(1)$ both in $\PSU(2)$ and $\PSL(2,\R)$. 
In the latter case, this copy of $\U(1)$ is a preferred maximal compact subgroup.

 The reduction of the structure group of $Q^c$ to $\PSU(2)$ induces a reduction of the structure group of $Q^r$ to $\U(1)$. 
 Indeed,  the corresponding $\U(1)$-subbundle is simply
 \begin{equation*}
 	P:=\big \{ p\in Q^r\mid \hat s(p) = \mathbbm 1 \big \} = Q^r\cap Q.
 \end{equation*}

Let $\cL$ be the complex Hermitian line bundle associated with $P$ and the standard $\U(1)$-rep\-re\-sen\-ta\-tion. 
Denote by $\chi$ the fiberwise symplectic form on $\cL$. 
A combination of the wedge-product and $\chi$ yields a fiberwise quadratic map $\chi(\cdot\wedge \cdot)\colon Sym^2\bigl ( T^*M\otimes \cL\bigr )\to \Lambda^2T^*M$. 

Since $P$ can be also viewed as a reduction of the structure group of $Q$ to $\U(1)$,  the bundle $\ad Q$ splits into the trivial bundle of rank $1$ and a bundle of rank $2$. 
The latter is naturally isomorphic to $\cL$.

\begin{lem}
	\label{Lem_FlatPSL2RRedSW}
	For each flat stable $\PSL(2,\R)$-connection there exists a unique triple $(\cL, a, b)$, where $\cL$ is a Hermitian line bundle, $a$ is a Hermitian connection on $\cL$, and $b$ is a 1-form with values in $\cL$, such that  $(a,b)$ satisfies
	\begin{equation}
	\label{Eq_SFSl2Creduced}
	(d_a + d_a^*) b=0\qquad\text{and}\qquad 	F_a=\chi(b\wedge b) i.
	\end{equation} 
	
	Conversely, for any triple $(\cL, a, b)$ as above such that $(a, b)$ satisfies~\eqref{Eq_SFSl2Creduced} there is a unique  principal $\PSU(2)$-bundle $Q:=P\times_{\U(1)}\PSU(2)$ and a unique flat stable $\PSL(2,\R)$-connection with 
	\begin{equation}
		\label{Eq_QrQc}
		Q^r:=P\times_{\U(1)}\SL(2,\R)\;\subset\; P\times_{\U(1)}\PSL(2,\C)=:Q^c.
	\end{equation}
\end{lem}
\begin{proof}
	Let $\fu (1)\subset \fpsl(2;\R)$ be the Lie algebra of the maximal compact subgroup $\U(1)\subset \PSL(2;\R)$. 
	Notice that with respect to the decomposition  $\fsl(2,\C)=\su(2)\oplus\su(2)i$ we have $\fpsl(2;\R) = \fu(1)\oplus \fu(1)^\perp i$, where $\fu(1)^\perp$	is the orthogonal complement of $\fu(1)$ in $\su(2)$. 
	Choose an orthonormal oriented basis $(\eta_1,\eta_2,\eta_3)$ of $\su(2)$ such that $\fu(1)=\R\eta_1$ and $\mathrm{span}(\eta_2,\eta_3)=\fu(1)^\perp$, say 
	\begin{equation}
	\label{Eq_BasisSU2}
	\eta_1=
	\begin{pmatrix}
	0 & -1\\ 1 & \phantom{-}0
	\end{pmatrix},
	\quad
	\eta_2=
	\begin{pmatrix}
	\phantom{-}0 & -i\\ -i & \phantom{-}0
	\end{pmatrix},
	\qand\quad
	\eta_3=
	\begin{pmatrix}
	i & \phantom{-}0\\ 0 & -i
	\end{pmatrix}.
	\end{equation}
	Notice that there is a unique symplectic form $\chi$ on $\fu(1)^\perp$ such that $\chi(\eta_2,\eta_3) =1$, that is  $(\eta_2,\eta_3)$ is a symplectic basis of $\fu(1)^\perp$.

	If $Q^r\subset Q^c$ is a principal $\PSL(2,\R)$-subbundle as above, then  $A=a+(\pi^*b)i$ reduces to $Q^r$ if and only if the real part takes values in the one-dimensional subspace $\fu(1)\subset \su(2)$, while the imaginary part takes values in $\fu (1)^\perp i$.
	In other words, $a$ is a connection on $P = Q^r\cap Q$ and $b$ is a one-form on $M$ with values in $\cL=P\times_{\U(1)}\C$. 
	
	Furthermore,  the splitting $\su(2) = \fu(1) \oplus \fu(1)^\perp$ yields the decomposition $\ad Q^r=\underline{\R}\oplus \cL$.
	Since $[\eta_2,\eta_3] = 2\eta_1$, 
	the restriction of the Lie-brackets to $\cL$ takes values in $\underline{\R}$ and equals $2\chi$.
	Hence, the real part of $F_A$ vanishes if and only if $F_a = \chi(b \wedge b)i$.
	Thus, a $\PSL(2,\R)$-connection $A=a+(\pi^*b)i$ is flat and stable if and only if~\eqref{Eq_SFSl2Creduced} holds.

	 \medskip
	
	Conversely, given a Hermitian line bundle $\cL$ together with a solution $(a,b)$ of~\eqref{Eq_SFSl2Creduced}, essentially the same computation (reading backwards) yields that $A=a+ (\pi^*b)i$ is a framed flat stable $\PSL(2;\R)$-connection with $Q^r$ given by~\eqref{Eq_QrQc}.
\end{proof}

\begin{example}
	Let $\cL$ be the product line bundle $\underline \C:=M\times\C$ and $a=\vartheta$ the product  connection. 
	Assume furthermore that $b$ is a real-valued harmonic 1-form.
	Then for any $w\in \C$ the pair $(\vartheta, wb)$ is clearly a solution of~\eqref{Eq_SFSl2Creduced}. 
	In this case the holonomy of $A$ is contained in a one-parameter subgroup generated by an element of $\fu(1)^\perp\otimes i\subset \fsl(2,\R)$.
	In particular, $\Hol(A)$ is abelian.  
\end{example}

Starting from a different perspective, consider the Seiberg--Witten equations~\eqref{Eq_SW2gen} with $E=\slS$, which is equipped with the Levi--Civita connection. 
From now on I will assume this particular twist throughout even if this is not mentioned explicitly. 
In this case we have a well-defined trace map
 \[
 \tr\colon\Hom(\slS,\slS\otimes \cL)\cong \End(\slS)\otimes\cL\to \cL.
 \]
Denote by $\Hom_0(\slS,\slS\otimes\cL)\cong \End_0(\slS)\otimes\cL$ the subbundle of traceless homomorphisms. 	
The Clifford multiplication (twisted by the identity map on $\cL$) provides an isomorphism
\[
\Cl\colon T^*_\C M\otimes \cL\to \End_0(\slS)\otimes \cL, 
\]
which in turn yields an isomorphism 
\[
\Upsilon\colon\Hom (\slS, \slS\otimes\cL)\longrightarrow T^*_\C M\otimes \cL \oplus \cL,
\]
where $\Upsilon = (\Cl^{-1}, \tr )$ and $\Cl^{-1}$ is extended trivially to the trace-component.

\begin{lem}
	\label{Lem_SW2PSL2RBijection}
	A pair $(a,\Psi)$ such that $\tr\Psi =0$ is a solution of~\eqref{Eq_SW2gen} with $E=\slS$ and $b$ being the Levi-Civita connection  if and only if $(a, b)=\left ( a, \Cl^{-1}(\Psi)  \right)$ solves~\eqref{Eq_SFSl2Creduced}.
	Moreover, the following holds:
	\begin{enumerate}[(i)]
		\item \label{It_LnontrivialImpliesTrace0}
		If $\cL$ is non-trivial, then for any solution of~\eqref{Eq_SW2gen} we have $\tr \Psi =0$;
		\item Any solution $(a,\Psi)$ of~\eqref{Eq_SW2gen} with $\tr \Psi\neq 0$ is gauge-equivalent to a pair $(\vartheta, \om)$, where $\vartheta$ is the product connection on $\cL=\underline{\C}$ and $\om$ is a purely imaginary harmonic 1-form.
		In particular, in this case $(a,\Psi)$ corresponds to a flat $\PSL(2,\R)$-connection with an abelian holonomy.
	\end{enumerate}
\end{lem}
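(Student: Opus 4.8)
The plan is to establish the bijection first and then peel off the two numbered statements as consequences of the algebra of the twisted Dirac operator and the moment-map equation. The key structural fact I would lean on is the decomposition of the twisted spinor bundle induced by $\Upsilon$: a section $\Psi$ splits as $\Psi = \Cl(b) + \tfrac12\,\mathrm{id}\otimes\omega$, where $b\in\Omega^1(M;\cL)$ is the trace-free part and $\omega\in\Omega^0(M;\cL)$ is the trace. First I would verify that under this splitting the twisted Dirac equation $\slD_{a\otimes b}\Psi=0$ separates: because Clifford multiplication intertwines the Dirac operator with $(d_a+d_a^*)$ on forms (this is the standard identification of $\slS$ with $\Lambda^\bullet T^*M$ in dimension three, tensored with $\cL$), the trace-free component yields exactly $(d_a+d_a^*)b=0$, while the trace component yields a first-order equation for $\omega$. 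The curvature equation $F_a=\mu(\Psi)$ must likewise be translated: since $\mu(\Psi)$ is a trace-free Hermitian endomorphism of $\slS$ identified with an imaginary $2$-form, I would compute $\mu(\Psi)$ in terms of $b$ and $\omega$ and check that when $\omega=0$ it reduces to $\sigma(b\wedge b)$, recovering the second equation of~\eqref{Eq_SFSl2Creduced}.

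For the main bijection, I would argue that $\tr\Psi=0$ means exactly $\omega=0$, so $\Psi=\Cl(b)$ and the two translated equations become precisely~\eqref{Eq_SFSl2Creduced}; conversely any $(a,b)$ solving~\eqref{Eq_SFSl2Creduced} produces $\Psi=\Cl(b)$ with $\tr\Psi=0$ solving~\eqref{Eq_SW2gen}. The nontrivial content is showing that the correspondence is well-defined and inverse to itself, which follows once the two equation-translations above are in hand.

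For part~\eqref{It_LnontrivialImpliesTrace0}, I would examine the trace component of the Dirac equation more carefully. Writing out $\tr(\slD_{a\otimes b}\Psi)=0$ should give an equation of the form $(d_a+d_a^*)\omega=0$, or more precisely a twisted Dirac equation for the $\cL$-valued function $\omega$; combining the $0$-form and $2$-form pieces I expect to obtain $d_a\omega=0$ and $d_a^*$ of the dual, forcing $\omega$ to be $a$-parallel. A nonvanishing $a$-parallel section of $\cL$ trivializes $\cL$ and forces $a$ to be (gauge-equivalent to) the product connection, so $\cL$ is trivial; contrapositively, if $\cL$ is nontrivial then $\omega\equiv0$, i.e.\ $\tr\Psi=0$. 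For part~(ii), when $\tr\Psi\neq0$ the section $\omega$ is a nonvanishing parallel section by the same computation, hence after a gauge transformation $a=\vartheta$ on $\cL=\underline\C$ and $\omega$ is a harmonic (purely imaginary, after matching the reality conventions of $\mu$) $1$-form; the trace-free part $b$ then solves the flat abelian equations, giving a flat $\PSL(2,\R)$-connection with abelian holonomy as in the Example.

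The main obstacle I anticipate is bookkeeping in the translation of $\mu(\Psi)=\Psi\Psi^*-\tfrac12|\Psi|^2$ and of the Dirac equation under $\Upsilon$: getting the quadratic form $\mu(\Psi)$ to land exactly on $\sigma(b\wedge b)$ requires tracking how the Hermitian adjoint, the Clifford multiplication, and the fiberwise symplectic form $\sigma$ interact, including reality and sign conventions for the identification of imaginary $2$-forms with trace-free skew-Hermitian endomorphisms. The cross-term $\mu$ between the $b$-part and the $\omega$-part, and the requirement that it vanish or be absorbed correctly, is where I would expect the argument to demand the most care; once the pointwise algebra is pinned down, the analytic conclusions (parallelism of $\omega$, harmonicity) are routine Weitzenb\"ock-type or unique-continuation arguments.
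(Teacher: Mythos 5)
Your plan follows the paper's route (decompose $\Psi$ via $\Upsilon$ into a trace part $s=\tr\Psi$ and a trace-free part identified with $b_1\in\Om^1(M;\cL)$, translate $\slD_{a\otimes b}\Psi=0$ into $(d_a+d_a^*)$-equations and $\mu(\Psi)$ into $\sigma(b_1\wedge b_1)$ plus a cross term, and for (i) use that a nonvanishing parallel trace trivializes $\cL$), but there is a genuine gap in part (ii): you never prove that the harmonic $1$-form is \emph{purely imaginary}, and you misdiagnose where that fact comes from. It is not a matter of "matching the reality conventions of $\mu$"; it is a consequence of the equations, extracted exactly from the cross term you flagged but did not resolve. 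In the paper's argument, once $s$ is parallel and nonvanishing one trivializes $\cL$, so $F_a=F_\vartheta=0$, while $\mu(\Psi)$ contributes the cross term between $s$ and the trace-free part, turning the curvature equation into $0=\sigma_0(b_1\wedge b_1)+2*\Re b_1$. Writing $b_1=b_{10}+b_{11}i$ this reads $b_{10}\wedge b_{11}+2*b_{10}=0$, and wedging with $b_{10}$ kills the cubic term ($b_{10}\wedge b_{10}=0$), giving $2\,|b_{10}|^2=2\,b_{10}\wedge *b_{10}=0$ pointwise, i.e.\ $\Re b_1=0$. Without this pointwise wedge argument your claim that "the trace-free part $b$ then solves the flat abelian equations" is unjustified: a priori $b$ solves the system \emph{with} the cross term, and $\sigma(b\wedge b)=0$ only follows after one knows $\Re b_1=0$.

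A secondary imprecision concerns your mechanism for (i)/(ii): under the identification of the twisted spinor bundle with $\bigl(\Om^0\oplus\Om^1\bigr)\otimes\cL$, the Dirac operator has the odd-signature form $(f,\beta)\mapsto\bigl(d_a^*\beta,\ d_a f+ *d_a\beta\bigr)$ up to conventions, so the trace and trace-free components \emph{couple}: the trace-related piece of $\slD\Psi=0$ is $d_a s+*d_a b_1=0$, not a standalone equation "$(d_a+d_a^*)\omega=0$" for the trace. Hence parallelism of $s$ does not drop out of the trace component alone, as your sketch suggests; the paper derives it from the full system \eqref{Eq_SW2gen}, and only after that does the trivialization make the untwisted equation $(d+d^*)b_1=0$ (in particular $db_1=0$) valid. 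Relatedly, you conflate notation: your $\omega$ is the trace, a section of $\cL$ (a $0$-form), whereas the $\om$ in the lemma's pair $(\vartheta,\om)$ is the $1$-form coming from the trace-free part; the trace only supplies the trivialization. The bijection itself and part (i) are essentially as in the paper, so the missing content is concentrated in the two points above, chiefly the $\Re b_1=0$ step.
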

\begin{proof}
	Let $(a,\Psi)$ be a solution of~\eqref{Eq_SW2gen} such that $\tr\Psi=0$. 
	A straightforward computation, whose details can be found in Appendix~\ref{Sect_AlgPtyMu},  yields that $\mu(\Psi) = \Cl\big (\chi(b\wedge b)i\,\big )$ so that $(a,b)$ solves~\eqref{Eq_SFSl2Creduced} indeed.

	Assume now that $s:=\tr\Psi\neq 0$. 
	It follows from~\eqref{Eq_SW2gen} that $s$ is a $\nabla^a$-covariantly constant section.
	Since $a$ is Hermitian, $s$ vanishes nowhere, hence proving~\textit{\ref{It_LnontrivialImpliesTrace0}}.
	Furthermore,  $a$ is the product connection \wrt the trivialization given by $s$.  
	Just like above, the traceless component $\Psi_0$ of $\Psi$ can be identified with some complex-valued 1-form $b_1$ and~\eqref{Eq_SW2gen} translates into  
	\begin{equation*}
	%\begin{aligned}
		(d+ d^*)\, b_1 =0\qquad\text{and}\qquad	
		F_a = \chi (b_1\wedge b_1)+ 2*\Re b_1,
	%\end{aligned}
	\end{equation*}
	where we also have $F_a=F_\vartheta =0$. 
	
	Furthermore, writing $b_1=b_{10} + b_{11}i$ we obtain $b_{10}\wedge b_{11} + 2*b_{10}=0$, which yields in turn
	\[
	2\,|b_{10}|^2 = 2\,b_{10}\wedge *b_{10} = - b_{10}\wedge b_{10}\wedge b_{11} =0,
	\]
	i.e., $\Re b_1=0$.

	Thus, in the case $s=\tr\Psi\neq 0$, a solution $(a,\Psi)$ of~\eqref{Eq_SW2gen} yields a trivialisation of $\cL$; Moreover,  $a$ is the product connection \wrt  this trivialisation, and $b_1$ is a purely imaginary harmonic 1-form.   
\end{proof}

Let  $(a,\Psi)$ be a solution of~\eqref{Eq_SW2gen} with $E=\slS$ and $b$ being the Levi--Civita connection.
Regarding $(a,\Psi)$ as a pair $(a, b)$ just like in \autoref{Lem_SW2PSL2RBijection} and recalling \autoref{Lem_FlatPSL2RRedSW}, we obtain  that  $A = a+(\pi^*b)i$ is a flat stable $\PSL(2,\R)$-connection. 
Assign to $(a,\Psi)$ the holonomy representation
\[
\rho_{a,\Psi}\colon \pi_1(M)\to\PSL(2,\R)
\]
of $A$. Of course, $\rho_{a,\Psi}$ is well-defined up to the conjugation in $\PSL(2,\R)$ only.

\section{Involutions and a homeomorphism between moduli spaces}
\label{Sect_Involutions}

Pick $\a\in H^2(M;\Z)$ and let  
\begin{equation*}
	\cM_{SW2}(\a):=\big \{ (a,\Psi) \mid (a,\Psi) \text{ solves } \eqref{Eq_SW2gen}\text{ and } c_1(\cL) = \a\,\big\}/C^\infty(M;\U(1))
\end{equation*}
be the moduli space of solutions of \eqref{Eq_SW2gen} with a fixed line bundle $\cL$. 
Denote also 
\begin{equation*}
	\cM_{SW2}:=\bigsqcup_{\a\,\in\, H^2(M;\,\Z)} \cM_{SW2}(\a).
\end{equation*}
We shall see below that the above union is in fact finite. 

It is well-known that the moduli space of solutions of the classical Seiberg--Witten equations  (just one spinor) is equipped with an involution~\cite{Morgan96_SWequations}*{Sect.\,6.7}. 
This construction carries over essentially verbatim to the present setting and yields an involution $\sigma\colon \cM_{SW2}\to \cM_{SW2}$ such that $\sigma\colon \cM_{SW2}(\a)\to \cM_{SW2}(-\a)$ is a homeomorphism for each $\a\in H^2(M;\Z)$.  

However, it is easier and more convenient to describe this involution in terms of solutions of~\eqref{Eq_SFSl2Creduced}. 
Thus, set 
\begin{equation*}
	\sM(\a):= \big \{ (a,b) \text{ solves } \eqref{Eq_SFSl2Creduced} \text{ and } c_1(\cL)=\a\,\big \}/C^\infty(M;\U(1))\qquad\text{and}\qquad \sM:=\bigsqcup_{\a}\sM(\a). 	
\end{equation*}
Letting $\cL^\vee$ denote the dual of $\cL$, define $\sigma\colon \cA(\cL)\times \Om^1(M;\cL) \to \cA(\cL^\vee)\times\Om^1(M;\cL^\vee)$ by 
\begin{equation*}
	\sigma(a, b)=\big (  a^\vee, \,  b^\vee\,\big ),	
\end{equation*}
where $a^\vee$ is the connection dual to $a$ and if $b$ equals  $\om\otimes s$ locally, then $ b^\vee = \bar \om \otimes \langle \cdot, s\rangle$.
Then $\sigma$ preserves the space of solutions of~\eqref{Eq_SFSl2Creduced} and yields a homeomorphism $\sigma\colon \sM(\a)\to \sM(-a)$. 
This defines implicitly an involution, still denoted by the same letter, on $\cM_{SW2}$.

\begin{lem}
	\label{Lem_InvolutionEqn}
	Let $Q^c$ be a principal $\PSL(2,\C)$-bundle equipped with a principal $\PSU(2)$-subbundle $Q$. 
	Let  $g_0\in \PSL(2,\C)$ denote a non-trivial representative in $N(\PSL(2,\R))/\PSL(2,\R)\cong \Z/2\Z$, where $N(\PSL(2,\R))$ is the normalizer of $\PSL(2,\R)$ in $\PSL(2,\C)$. 
	Let $A$ be a flat stable $\PSL(2,\C)$ connection which reduces to a principal $\PSL(2,\R)$-bundle $Q^r\subset Q^c$. 
	If $P=Q^r\cap Q$ and  $A|_{P} = a + (\pi^*b)i$, then $A$ also reduces to $Q^r\cdot g_0$ and the corresponding $\U(1)$-bundle $P^\vee:=(Q^r\cdot g_0)\cap Q$ is naturally isomorphic to the $\U(1)$-bundle of $\cL^\vee$. 
	Moreover, the restriction of $A|_{P^\vee}$  corresponds to the pair $\sigma(a, b)$.    
\end{lem}  
\begin{proof}
	Choose 
	\begin{equation*}
		g_0= 
		\begin{pmatrix}
			0 & i\\
			i & 0
		\end{pmatrix}.
	\end{equation*}
	We have $P^\vee=(Q^r\cdot g_0)\cap Q = P\cdot g_0$, since $g_0\in \PSU(2)$. 
	Moreover, the natural map $P\to P^\vee$, \ $p\mapsto p\cdot g_0$ is bijective and fiber-preserving, however maps $p\cdot z$ to $p\cdot zg_0=(p\cdot g_0)\cdot\bar z$, where $z\in \U(1)=\PSL(2,\R)\cap \PSU(2)$.
	Therefore, $P^\vee$ is isomorphic to the principal  $\U(1)$-bundle corresponding to $\cL^\vee$. 
	Moreover, if $A|_{P}=a + (\pi^*b) i$, then $A|_{P\cdot g_0} =g_0^{-1} \big (a+ (\pi^*b) i \big ) g_0 = -a + (\overline {\pi^* b}) i$, where $\overline {b_2\xi_2 + b_3\xi_3} = b_2\xi_2 - b_3\xi_3$.
	This finishes the proof of this lemma.  
\end{proof}

Furthermore, let %the $\PSL(2,\R)$-representation variety of $M$ defined by 
\begin{equation*}
	\cR(M):=\Hom\big ( \pi_1(M),\; \PSL(2,\R)  \big)/\text{conj.}
\end{equation*} 
be the $\PSL(2,\R)$-representation variety of $M$, where the quotient is taken by the conjugation in $\PSL(2,\R)$.  
Observe that $\cR(M)$ is also equipped with an involution, which I also denote by $\sigma$. 
Indeed, keeping to the notations of  \autoref{Lem_InvolutionEqn}, $\sigma$ is defined by  $\sigma(\rho) = g_0\rho g_0^{-1}$. 
Moreover, the natural map
\begin{equation*}
	\hat H\colon \cM_{SW2}\to \cR(M), \qquad \hat H\big ([a,\Psi] \big ) = [\rho_{a, \Psi}]  
\end{equation*}
is equivariant, that is $\hat H\comp\sigma = \sigma\comp \hat H$. 
Thus, $\hat H$ yields a map
\begin{equation*}
	H\colon \cM_{SW2}/\langle \sigma\rangle \longrightarrow \cR(M)/ \langle \sigma\rangle.
\end{equation*}

A representation $\pi_1(M)\to \PSL(2,\R)$ is said to be irreducible, if no point in $\CP^1$ is fixed by all elements of $\pi_1(M)$.
Here $\PSL(2,\R)$ acts on $\CP^1$ via the standard embedding $\PSL(2,\R)\subset \PSL(2,\C)$, where the latter group acts via M\"obius transformations.
Denote by $\cR^{irr}(M)$ the subspace of classes of irreducible representations and set 
\begin{equation*}
	\cM^{irr}_{SW2}:=\big \{  (a,\Psi)\mid (a,\Psi)\text{ solves }\eqref{Eq_SW2gen}\text{ and } \rho_{a,\Psi}\text{ is irreducible} \big \}/C^\infty(M;\U(1)).
\end{equation*}
Notice that if $\Psi\equiv 0$,  the corresponding $\PSL(2,\R)$-representation cannot be irreducible so that $\cM_{SW2}^{irr}$ consists of irreducible solutions in the sense of the Seiberg--Witten theory.  

Since $\PSL(2,\C)$ acts freely on the space of irreducible $\PSL(2,\C)$-representations, $\sigma$ acts freely on $\cR^{irr}(M)$ and, therefore, on $\cM_{SW2}^{irr}$ too. 
Clearly, by restriction we obtain a map
\begin{equation}
	\label{Eq_Hirr}
	H\colon \cM^{irr}_{SW2}/\langle \sigma\rangle \longrightarrow \cR^{irr}(M)/ \langle \sigma\rangle
\end{equation}
still denoted by the same letter. 

\begin{proposition}
	\label{Prop_HisHomeo}
	$H$ is a homeomorphism. 
\end{proposition}
\begin{proof}
		%Let $\tilde M$ denote the universal covering of $M$.
	By a result of Donaldson\footnote{In~\cite{Donaldson:87_TwistedHarmonicMaps} the base manifold is of dimension two, however this is not really used in the proof. }~\cite{Donaldson:87_TwistedHarmonicMaps}, an irreducible  representation $\rho\colon\pi_1(M)\to\PSL(2,\R)\subset \PSL(2,\C)$ yields a flat stable $\PSL(2,\C)$-connection $A_\rho$ on a $\PSL(2,\C)$-bundle $Q^c$. 	
	Pick a reduction of the structure group of this bundle to $\PSU(2)$. 
	
	Since the holonomy of $A_\rho$ is contained in $\PSL(2,\R)$, there is a $\PSL(2,\R)$-subbundle $Q^r\subset Q^c$ such that $A_\rho$ reduces to $Q^r$, cf. Remark~\ref{Rem_UniquenessPSL2Rsubbundle}. 
	Let $q_0\in Q^r$ be a base point such that the holonomy representation of $A_\rho$ equals $\rho$. 
	If the holonomy representation of $A_\rho$ with respect to any other base point $q_0\cdot g$, $g\in \PSL(2,\C)$, is also contained in $\PSL(2,\R)$, then $g\in N(\PSL(2,\R))$.
	Hence, $A_\rho$ reduces to exactly two $\PSL(2,\R)$-subbundles, namely $Q^r$ and $Q^r\cdot g_0$. 
	If $A_\rho |_{Q^r}=a_\rho +(\pi^*b_\rho)i$, then $[a_\rho,b_\rho]\in  \sM/\langle \sigma\rangle$ is well defined.  
	By Lemma~\ref{Lem_SW2PSL2RBijection} we obtain a unique class $[a,\Psi]\in \cM_{SW2}/\langle \sigma\rangle$ such that $H\big ([a,\Psi]\big ) =[\rho]$.
	This finishes the proof of this proposition. 	
\end{proof}

It is a well-known fact from the Seiberg--Witten theory, that the moduli space of the classical Seiberg--Witten monopoles is non-empty for finitely many spin$^c$-structures only, see for example~\cite{Morgan96_SWequations}*{Thm.\,5.2.4}.
To the best of my knowledge, it is not known whether this holds for the Seiberg--Witten equations with two or more spinors.
However, \autoref{Prop_HisHomeo} can be used to show that for $E=\slS$ this finiteness property still holds.

\begin{corollary}
	\label{Cor_FinManyBasicClasses}
	If $E=\slS$ is equipped with the Levi--Civita connection, then the set 
	\begin{equation*}
		\bigl \{ \a\in H^2(M;\Z)\mid  \cM_{SW2}(\a)\neq\varnothing  \bigr \}	
	\end{equation*}
	is finite. 
\end{corollary}
\begin{proof}
	Choose a basis $(\sigma_1,\dots, \sigma_{b_2})$ of $H_2(M)/\mathrm{Tor}$ and represent each $\sigma_j$ by an embedded surface $\Sigma_j$.
	Pick $\a \in H^2(M;\Z)$ and a line bundle $\cL$ such that $c_1(\cL) = \a$.
	If   $\cM_{SW2}(\a)\neq\varnothing$, by the Milnor--Wood inequality~\cite{Milnor58_ExistenceOfFlatConn} we obtain 
	\[
	\bigl |c_1(\cL|_{\Sigma_j})\bigr|=
	\bigl| \langle \sigma_j, c_1(\cL) \rangle \bigr|\le \mathrm{genus}(\Sigma_j)-1.
	\]	
	This implies the statement of this corollary.
\end{proof}

Let me finish this section with some examples of  $\PSL(2;\R)$ representation varieties---hence, also of the moduli space of solutions to~\eqref{Eq_SW2gen}---paying a particular attention to the case of integral homology spheres, since this will be of interest in the next section.

	For the Brieskorn homology sphere 
\begin{equation}
\label{Eq_BrieskornHomSpheres}
\Sigma(p,q,r):=\bigl\{  z\in\C^3\mid\; z_1^p + z_2^q + z_3^r =0\;\bigr \}\cap S^5,
\end{equation}
where $p,q,r$ are coprime positive integers, 
the $\PSL(2,\R)$--representation variety is finite~\cite{KitanoYamaguchi16_SL2RBrieskorn_Arx}. 
Moreover, all non-trivial representations are irreducible.  
In particular, for $M=\Sigma(p,q,r)$ the space $\cM_{SW2}$ is compact. 

The Brieskorn homology spheres can be also used to construct examples of homology spheres, for which the $\PSL(2,\R)$ representation varieties are non-compact as follows. 
Assume $M_1$ and $M_2$ are two arbitrary closed three-manifolds each admitting an irreducible representation $\rho_i\colon \pi_1(M_i)\to \PSL(2,\R)$. 
Assume also for the sake of simplicity that each $\rho_i$ is rigid, i.e., that $[\rho_i]$ is an isolated point in $\cR(M_i)$.
By the van Kampen theorem, the fundamental group of the connected sum $M:=M_1\# M_2$ is the free product  $\pi_1(M_1)\ast \pi_1(M_2)$ so that we obtain a non-trivial family of representations $\rho_A\colon \pi_1(M_1)\ast \pi_1(M_2)\to \PSL(2,\R)$ corresponding to $(\rho_1, A\rho_2 A^{-1})$, where $A\in\PSL(2,\R)$ is a parameter.   
Notice that $\rho_A$ is conjugate to $\rho_B$ if and only if $A=B$. 
It is easy to see that $\rho_{A_k}$ converges in $\cR(M)$ if and only if $A_k$ converges in $\PSL(2,\R)$. 
Hence, $\cR(M)$ is non-compact.
In particular, this yields the following: If $M$ is the connected sum of two Brieskorn homology spheres --- and, hence, also a homology sphere --- then  the moduli space of solutions of~\eqref{Eq_SW2gen} on $M$  is non-compact.
Hence~\cite{HaydysWalpuski15_CompThm_GAFA},~\eqref{Eq_SW2genLim} admits non-trivial solutions on such $M$ for any background metric, even though $M$ does not support any non-trivial honest harmonic 1-form.

 \section{Blow up sets}
 \label{Sect_BlowUpSets}

 Let $Z$ be a blow up set and $(a,\Psi)$ a solution of~\eqref{Eq_SW2genLim} just as in Definition~\ref{Defn_BlowUpSet}. 
In view of \autoref{Lem_SW2PSL2RBijection}, the case $\tr\Psi\neq 0$ is easy to analyse, hence from now on I will assume that $\tr \Psi =0$.
Just like in Section~\ref{Sect_BasicConstr}, $\Cl^{-1}(\Psi)=:b\in\Om^1(M\setminus Z; \cL)$ satisfies
\begin{equation}
	\label{Eq_SW2Lim}
	(d_a + d_a^*)\,b=0,\quad \sigma(b\wedge b) =0 \qquad\text{over }M\setminus Z. 
\end{equation}
Of course, $|b|$ also extends to $M$ as a continuous function and $|b|^{-1}(0) = Z$. 

If we consider $b$ as a section of $\Hom (TM;\, \cL)$, then the condition $\sigma(b\wedge b)$ implies that  the image of $b$ in $\cL$ is one-dimensional over $M\setminus Z$. 
Hence, we can define the real line bundle $\cI:=\Im b\subset \cL$ over $M\setminus Z$ and consider $b$ as a 1-form with values in $\cI$.
Hence,  a solution of \eqref{Eq_SW2Lim} can be thought of a $\Z/2$ harmonic 1-form in the following sense.

\begin{defn}[\cite{Taubes13_PSL2Ccmpt}]
	Let $(\om, Z,\cI)$ be a triple such that 
	\begin{itemize}[itemsep=-4pt, topsep=2pt]
		\item $Z$ is a closed subset of $M$ of Hausdorff dimension at most one;
		\item $\cI$ is a real Euclidean line bundle over $M\setminus Z$ equipped with a canonical Euclidean connection $\nabla$; 
		\item $\om\in \Gamma\big ( M\setminus Z;\, T^*M\otimes\cI\big )$ satisfies $d\om =0 = d^*\om $;
		\item $\int_{M\setminus Z} |\nabla\om |^2<\infty$;
		\item $|\om|$ extends as a H\"older-continuous function to all of $M$ and $|\om|^{-1}(0) = Z$. 
	\end{itemize}
	Under these circumstances $(\om, Z,\cI)$ is called a $\Z/2$ harmonic 1-form. 
\end{defn}

%\begin{remark}
%	If $(a, b)$ is a solution of~\eqref{Eq_SW2Lim}, writing $b=\om\otimes s$ locally, where $\om$ is a local 1-form and $s$ is a local section of $\cL$ with pointwise norm $1$, it is easy to see that $a$ preserves $\cI$. 
%	Another way to say this is that the holonomy subbundle of $a$ is isomorphic to the principal $\{\pm 1 \}$-bundle of $\cI$.  
%\end{remark}
%
%\medskip

%=======================================

From now on I assume that $M$ is an integral homology sphere and that the blow up set $Z$ is a 1-dimensional submanifold of $M$, that is a link in $M$. 
Let  $Z_1$ denote the union of all components of $Z$ such that $\mathcal I$ is non-trivial on the meridian of any component.
Notice that $Z_1\neq\varnothing$, since otherwise $\cI$ would be trivial so that $\om$ would extend as a non-trivial harmonic 1-form to all of $M$.

The double covering branched along a link plays an important r\^ole in what follows below, therefore let me pause for a while to recall this construction.
Let me pick $Z_1$ as a branching set and for this reason it is convenient to choose an orientation of each component of $Z_1$, albeit this will play a minor r\^ole below. 
Denoting by $N_{Z_1}$ a (small) tubular neighborhood of  $Z_1$ so that the closure $\bar N_{Z_1}$ consists of  $k$ solid tori, where $k$ is the number of connected components of $Z_1$.  
The Mayer--Vietoris sequence applied to $M\setminus Z_1$ and $N_{Z_1}$ yields 
\begin{equation*}
	H_1(M\setminus {Z_1})=\Z\mu_1\oplus\dots\Z\mu_k,
\end{equation*}
where each $\mu_j$ is a meridian of a connected component of $Z_1$, see for example~\cite{Prasolov07_HomologyTh}*{PP.\,367--368}.

Denote $Y:=M\setminus N_{Z_1}$ and consider the homomorphism 
\begin{equation}
\label{Eq_DoubleCovHom}
\pi_1(Y)\to H_1(Y)\cong H_1(M\setminus Z_1)\to \Z \to \Z/2,
\end{equation}
where the first arrow stands for the abelianization homomorphism, the second one is given by $\sum a_j\mu_j\mapsto \sum a_j$, and the last one is the canonical projection. 
The kernel of this homomorphism is an index $2$ subgroup of $\pi_1(M\setminus N_{Z_1})$. 
Let $\hat Y$ denote the corresponding (unbranched) double covering of $Y$.
This is an oriented manifold with boundary consisting of $k$ disjoint tori. 
We glue in $\bar N_{Z_1}$ via a homomorphism $h\colon \del \hat Y\to \del \bar N_{Z_1}$ such that $h(\hat \mu_j) = \mu_j$, where $\hat\mu_j$ is the preimage in $\hat Y$ of $\mu_j$. 
The result of this is a smoth oriented manifold $\hat M$, which is called the double covering of $M$ branched along $Z_1$.

Notice that the construction yields naturally a smooth map $\pi\colon \hat Y\to Y$, which can be extended as a map $\pi\colon \hat M\to M$ such that $\pi$ is 2-to-1 over $M\setminus Z_1$ and 1-to-1 over the branching set $Z_1$. 
One way to fix such extension is as follows.  
Chose an identification $ \bar N_{Z_1}\cong \sqcup \big ( S^1\times D\big )$, where $D=\{ |z|\le 1 \}$ is the unit disc in $\C$.  
This identification can be chosen so that the canonical involution on $\hat Y$ extends as a map $(\theta, z)\mapsto (\theta, -z)$ on each connected component of $\bar N_{Z_1}$ and we can set $\pi(\theta, z)=(\theta, z^2)$.
Albeit this particular extension is very common in the literature, notice that this is by no means unique.  
I shall come back to this point in the proof of \autoref{Prop_DoubleCover}   below.

\medskip

 Recall  that a representation $\rho\colon G\to \PSL(2;\R)$ of a group $G$ is called metabelian, if $\rho\bigl ([G,G]\bigr )$ is an abelian subgroup. 
 
 \begin{proposition}
 	\label{Prop_DoubleCover}
 	Let $M$ be an integral homology sphere. 
 	For a solution $(a,b)$ of \eqref{Eq_SW2Lim} denote $A:=a + bi$, which is a flat $\PSL(2;\R)$-connection on $M\setminus Z$.
 	Then the following holds:
 	\begin{enumerate}[(i),itemsep=-4pt, topsep=2pt]
 		\item\label{It_PullbackItrivial} $\pi^*\mathcal I$ is trivial over $\hat M\setminus \hat Z_1$; 
 		%Hence, $\pi^*A_\om$ is a harmonic 1-form on $M_2\setminus \hat Z_1$ with respect to the metric $\pi^*g$;
 		\item\label{It_HolonomyAbelianNontriv}	 The holonomy representation of $\pi^*A$ is abelian and non-trivial.
 		\item\label{It_HolonomyMetabelian} 	
 		The holonomy representation of $A$ is metabelian.
 		\item\label{It_FirstBettiNumber}
 		The first Betti number of $\hat M$ is positive.
 	\end{enumerate}
 \end{proposition}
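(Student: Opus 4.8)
The plan is to establish the four assertions in the order \ref{It_PullbackItrivial}, then \ref{It_HolonomyAbelianNontriv} together with \ref{It_FirstBettiNumber}, and finally \ref{It_HolonomyMetabelian}, since each rests on the previous ones. Throughout I use that on an integral homology sphere $H_1(M\setminus Z_1;\Z)\cong\Z^{k}$ is freely generated by the meridians $\mu_1,\dots,\mu_k$ of the components of $Z_1$ (from the long exact sequence of $(M,M\setminus Z_1)$ and $H_1(M)=H_2(M)=0$), and that $b\not\equiv 0$ because $|b|^{-1}(0)=Z$. Since $\cI=\Im b$ has trivial monodromy along the meridians of $Z_0:=Z\setminus Z_1$ and $b\to 0$ there, both $\cI$ and the flat connection $A$ extend across $Z_0$, so I regard $\rho$ as the holonomy of a flat $\PSL(2,\R)$--connection on $M\setminus Z_1$. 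For \ref{It_PullbackItrivial} I would compare two characters $\pi_1(M\setminus Z_1)\to\Z/2$: the orientation character $w_1(\cI)$, which by the definition of $Z_1$ sends each $\mu_i$ to $1$, and the character $\phi$ defining $\pi$, which sends $\mu_i\mapsto\mathrm{lk}(\mu_i,Z_1)\equiv 1\bmod 2$. As they agree on the generating meridians they coincide, so $\pi$ is precisely the orientation double cover of $\cI$ and $\pi^*\cI$ is trivial over $M_2\setminus\hat Z_1$. As a by-product $Z_1\neq\varnothing$: otherwise $\cI$ would be a trivial real line bundle on all of $M$, forcing the harmonic $1$-form underlying $b$ to vanish; in particular $M_2$ is connected.

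For \ref{It_HolonomyAbelianNontriv} I would trivialise $\pi^*\cI$ using \ref{It_PullbackItrivial}. Because $\Hol(a)\subset\{\pm1\}$ acts on $\cL$ only through the orientation of $\cI$, the connection $\pi^*a$ then has trivial holonomy; gauging it to the product connection by parallel transport and carrying along the $a$--parallel line $\cI$ puts $\pi^*b$ into the normal form $\hat\omega\,\xi$ for a single non-zero vector $\xi\in\fu(1)^\perp i$ and a real $1$--form $\hat\omega$ on $M_2\setminus\hat Z_1$. Since $[\xi,\xi]=0$, the flatness of $A$ and the equations \eqref{Eq_SW2Lim} reduce to $d\hat\omega=0=d^{*}\hat\omega$, so that $\hat\omega$ is harmonic and the holonomy of $\pi^*A$ along a loop $\gamma$ is $\exp\bigl((\int_\gamma\hat\omega)\xi\bigr)$. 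These elements all lie in the one--parameter subgroup $\exp(\R\xi)$, so the holonomy is abelian; its non-triviality, and assertion \ref{It_FirstBettiNumber}, will follow from the extension step below, using that $t\mapsto\exp(t\xi)$ is injective because $\xi\in\fu(1)^\perp i$ is semisimple with real eigenvalues.

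The step I expect to be the main obstacle is to upgrade $\hat\omega$ to a non-zero harmonic $1$--form on the \emph{closed} manifold $M_2$. Boundedness is not an issue, since $|\hat\omega|=|b|\circ\pi$ is bounded and tends to $0$ along $\hat Z_1$; consequently the period of the closed form $\hat\omega$ around a meridian of $\hat Z_1$ — the same for all concentric small circles and bounded by a constant times their length — must vanish, so $\hat\omega$ already defines a de Rham class on $M_2$. A removable--singularity argument for bounded harmonic forms across the codimension--two set $\hat Z_1$ (this is where the delicate regularity of the $\Z/2$ harmonic form $b$ near its singular set, resolved by the local model $z=w^2$ of $\pi$, really enters) then produces a genuine harmonic $1$--form on $M_2$, non-zero because $b\not\equiv 0$. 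On a closed manifold a non-zero harmonic form has non-zero cohomology class, which gives $b_1(M_2)\geq 1$, proving \ref{It_FirstBettiNumber}, and non-zero periods, proving the non-triviality in \ref{It_HolonomyAbelianNontriv}.

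Finally \ref{It_HolonomyMetabelian} is formal. The index--two subgroup $H:=\ker w_1(\cI)=\pi_*\pi_1(M_2\setminus\hat Z_1)$ contains the commutator subgroup of $\pi_1(M\setminus Z_1)$, since $\pi_1(M\setminus Z_1)/H\cong\Z/2$ is abelian, and $\rho(H)$ equals the image of the holonomy of $\pi^*A$, which is abelian by \ref{It_HolonomyAbelianNontriv}. Hence $\rho$ maps the commutator subgroup into an abelian subgroup, i.e.\ $\rho$ is metabelian.
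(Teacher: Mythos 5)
Most of your proposal tracks the paper's own proof closely: (i) via the fact that $H_1(M\setminus Z_1;\Z)$ is freely generated by meridians, the normal form $\pi^*b=\hat\omega\,\xi$ with a fixed $\xi$ and holonomy read off from periods of $\hat\omega$ for (ii), and (iv) deduced from $[\hat\omega]\neq 0$. Your treatment of (iii) is a clean formal alternative: the paper instead computes the holonomy group explicitly as the subgroup generated by $\{\exp(t\xi)\}$ and an order-two rotation $B$ and checks directly that it is metabelian, while your index-two-subgroup argument avoids any matrix computation; both are fine. Your side remarks that the data extends across the components of $Z\setminus Z_1$ and that $Z_1\neq\varnothing$ (so $M_2$ is connected) fill in points the paper leaves implicit, and your period argument for why $\hat\omega$ defines a class on $M_2$ matches the paper's "since $\om$ vanishes along $\hat Z_1$".

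However, the step you yourself flag as the main obstacle is a genuine gap as written. The form $\hat\omega$ is co-closed only with respect to the pulled-back metric $\pi^*g$, which is \emph{not} a smooth metric on $M_2$: in the standard smooth structure it degenerates along $\hat Z_1$ (the local model $z\mapsto z^2$ pulls $|dz|^2$ back to $4|z|^2|dz|^2$). Since $d^*$ depends on the metric, a removable-singularity/elliptic-regularity argument for bounded harmonic forms across a codimension-two set has no smooth elliptic equation to apply to, and there is no reason the extension of $\hat\omega$ is harmonic for any smooth metric on $M_2$. The alternative reading of your sentence --- extend the closed form (your period argument does this) and pass to the harmonic representative of $[\hat\omega]$ for a smooth metric --- is circular: the harmonic representative of the zero class is zero, so ``non-zero because $b\not\equiv 0$'' proves nothing unless you already know $[\hat\omega]\neq 0$, which is exactly what is at stake. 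The paper's mechanism is different and essential: one changes the smooth structure on $M_2$ so that $\pi$ is locally $(\theta,z)\mapsto(\theta, z^2/|z|)$, whence $\pi^*g$ has bounded (Lipschitz) coefficients; then $\hat\omega$ lies in the space $\cH^2_1$ of $L^2$ one-forms that are closed and co-closed pointwise off $\hat Z_1$, and the Hodge theorem of Teleman and Wang for Lipschitz metrics provides an isomorphism $\cH^2_1\cong H^1(M_2;\R)$, whose injectivity gives $[\hat\omega]\neq 0$ directly from $\hat\omega\neq 0$ --- no smooth harmonic extension is ever produced or needed. With that $L^2$-Hodge-theoretic input substituted for your removable-singularity claim, the rest of your deduction (non-zero class forces a non-zero period, and $t\mapsto\exp(t\xi)$ is injective since $\xi$ is semisimple with real eigenvalues) is correct and completes (ii) and (iv) as in the paper.
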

 \begin{proof}
 	The flat bundle $\cI$ corresponds to a homomorphism $\pi_1(M\setminus Z_1)\to \Z/2$, which factors through $H_1(M\setminus Z_1)$, since $\Z/2$ is abelian:
 	\begin{equation*}
 		\tau\colon\pi_1(M\setminus Z_1)\to H_1(M\setminus Z_1)\to \Z/2.
 	\end{equation*}
 	By inspection, this coincides with~\eqref{Eq_DoubleCovHom} taking into account $\pi_1(Y)\cong \pi_1(M\setminus Z_1)$. 
 	Then the pull-back $\pi^*\cI$ is a flat bundle corresponding to the homomorphism
 	\begin{equation*}
 		\pi_1(M_2\setminus \hat Z_1) \xrightarrow{\ \pi_*\ } \pi_1(M\setminus Z_1)\xrightarrow{\ \tau\ } \Z/2,
 	\end{equation*}
 	which is trivial, since the image of $\pi_*$ is in the kernel of $\tau$. 
 	This proves~\textit{\ref{It_PullbackItrivial}}.
 	
 	To prove~\textit{\ref{It_HolonomyAbelianNontriv}},  consider the following  basis of $\fpsl(2,\R)$:
 	\[
 	\xi_1=
 	\begin{pmatrix}
 	0 & -1\\ 1 & \phantom{-}0
 	\end{pmatrix},
 	\quad
 	\xi_2=
 	\begin{pmatrix}
 	0 & 1\\ 1 & 0
 	\end{pmatrix},
 	\qand\quad
 	\xi_3=
 	\begin{pmatrix}
 	1 & \phantom{-}0\\ 0 & -1
 	\end{pmatrix},
 	\]
 	cf.~\eqref{Eq_BasisSU2}. 
 	Notice that the 1-parameter group generated by $\xi_1$ is isomorphic to $\U(1)$, whereas the one parameter group corresponding to any non-trivial $\xi\in \mathrm{span} \{ \xi_2, \xi_3 \}$ is isomorphic to $\R$.
 	
 	By~\textit{\ref{It_PullbackItrivial}}, $\pi^*a$ is a flat connection with trivial holonomy on a trivial line bundle. 
 	Therefore, after applying a gauge transformation we can assume that $a$ is the product connection on the product line bundle. 
 	
 	Furthermore, since $\cI$ is a subbundle of $\cL$, $\pi^*\cI = \underline\R$ is a subbundle of $\pi^*\cL$, i.e., we have a trivialization of $\pi^*\cL$ over $\hat M\setminus\hat Z_1$. 
 	Hence, $\pi^*b$ can be viewed as an $\R^2=\mathrm{span}\{ \xi_2, \xi_3 \}$-valued 1-form and $\pi^*A = d + (\pi^*b)i $ can be viewed as a connection on the product $\PSL(2;\R)$-bundle.
 	Then $F_{\pi^*A} =0$ together with $(d+d^*)\, \pi^*b =0$ imply that 
 	\begin{equation}
 	\label{Eq_AuxOmXi}
 	\pi^*b = \om\,\xi
 	\end{equation} 
 	for some fixed $\xi\in \mathrm{span}\{ \xi_1, \xi_2 \}$, where $\om$ is a 1-form. 
 	Moreover,  $\om$ is closed, and therefore the holonomy of $\pi^*A$ along a loop $\gamma$ is given by 
 	\[
 	\Hol_\gamma (\pi^*A) = \exp \Bigl ( \int_\gamma \om\; \xi\Bigr ).
 	\]
 	In other words, the holonomy of $\pi^*A$ is determined by the periods of $\om$. 
 	Even though $\om$ is only continuous along $\hat Z_1$, I claim that the de Rham cohomology class of $\om$ is well-defined and non-trivial.
 	The proof of this claim, however,  requires some background material, which is introduced first.
 	I follow the line of argument of~\cite{Wang93_ModuliSpacesInvolutions}*{Sect.\,1.2} in this part.
 	
 	Thus, pick a component of $\hat Z_1$ and identify its neighbourhood with $S^1\times D$ so that the canonical involution acts by multiplication by $-1$ on $D$  just as above.
 	It is convenient to extend $\pi\colon \hat Y\to Y$ to $\hat M$ so that on each component of $\bar N_{Z_1}$ it has the following form 
 	\begin{equation*}
 		\hat \pi\colon S^1\times \C\to S^1\times \C,\qquad (\theta, z)\mapsto (\theta, z^2/|z|).
 	\end{equation*} 
 	Notice that $\hat\pi$ is smooth away from $\hat Z_1$ but is only Lipschitz near $\hat Z_1$. 
 	Nevertheless, $\hat \pi$ has the following important advantage over $\pi$:  For any smooth metric $g_{0}$ on $\hat M$ there exist positive constants $c_1$ and $c_2$ such that the inequalities 
 	\begin{equation*}
 		c_1\, g_{0}\le \hat\pi^*g \le c_2\, g_{0}
 	\end{equation*}
 	hold on $\hat M\setminus \hat Z_1$ in the sense of quadratic forms. 
 	Hence,   $\hat \pi^*g$  is a Lipschitz metric on $\hat M$ in the sense of~\cite{Teleman83_IndexSignOnLipschitzMflds}*{Sec.\,3}. 
 	In particular, the spaces of $L^2$-functions (forms) on $\hat M$ with respect to $\hat\pi^*g$ and $g_0$ coincide and the corresponding norms are equivalent. 
 	
 	Furthermore, denote by $\cH^2_1$ the space of all 1-forms $\om$ on $\hat M$ such that the following holds: $\om$ is smooth on $\hat M\setminus \hat Z_1$, $\om\in L^2(T^*\hat M)$, and $d\om =0 =d \bigl (*_{\hat\pi^*g}\,\om\bigr )$ pointwise on $\hat M\setminus \hat Z_1$, cf.~\cite{Wang93_ModuliSpacesInvolutions}*{Def.\,6}. 
 	We have a version of Hodge theorem in this setting~\cite{Teleman83_IndexSignOnLipschitzMflds}*{Thm.\,4.1}, i.e., a natural isomorphism $\cH^2_1\to H^1(\hat M;\R)$.

 	With this understood, $\hat\pi^*\om$ is clearly a non-trivial harmonic 1-form with respect to $\hat\pi^*g$ taking values in $\hat\pi^*\cI\cong \underline\R$.
 	In particular, by the version of the Hodge theorem mentioned above, $[\hat\pi^*\om]=[\pi^*\om]$ represents a non-trivial class in $H^1(\hat M;\R)$.  
 	This finishes the proof of~\textit{\ref{It_HolonomyAbelianNontriv}} and proves~\textit{\ref{It_FirstBettiNumber}} as well. 	
 	
 	To prove~\textit{\ref{It_HolonomyMetabelian}}, notice that we have the following short exact sequence
 	\[
 	1\longrightarrow \pi_1(\hat M)\longrightarrow \pi_1(M\setminus Z_1)\xrightarrow{\ \;\tau\ \;} \{ \pm 1 \}\longrightarrow 1 
 	\]
 	where $\tau$ sends meridians of each components of $Z_1$ to $-1$. 
 	Combining this with~\textit{\ref{It_HolonomyAbelianNontriv}}, we obtain that  the holonomy of $A$ lies in the subgroup $H$ generated by matrices of the form $\exp (t\xi)$, where $\xi\in \mathrm{span}\{ \xi_1, \xi_2 \}$ is as in~\eqref{Eq_AuxOmXi}, and a matrix $B\in \{ \exp(t\xi_1) \}$ such that $B^2 = \mathbbm 1$. 
 	Concretely,
 	\begin{equation*}
 		B= 
 		\begin{pmatrix}
 			0 & -1\\
 			1 & \phantom{-}0
 		\end{pmatrix}.
 	\end{equation*}
 	Here I think of $\PSL(2;\R)$ as $\SL(2;\R)/\pm 1$ so that $B^2$ is the identity element indeed.

 	It is easy to check directly that $H$ is a metabelian subgroup of $\PSL(2,\R)$, i.e., $[H, H] = \{ \exp(t\xi) \}$ is abelian. 
 	Thus, the holonomy representation of $A$ is metabelian as claimed. 
 \end{proof}

  \begin{proof}[\textbf{\textit{Proof of \autoref{Thm_AlexPolyBlowUpSet}}}]
  	\label{Pf_AlexPoly}
 	It is well-known that if the Alexander polynomial of a link in an integral homology sphere does not vanish at $-1$, then the corresponding double branched covering is a rational homology sphere~\cite{Lickorish97_IntroKnotThry}*{Cor.\,9.2}  (see also~\cite{Kawauchi96_SurveyKnotTheory_Book}*{Sect.\,5.5} and~\cite{PoudelSaveliev17_LinkHomolEquivGaugeTh}*{Prop.\,3.1}).
 	For a one-component link, i.e., a knot, the double branched covering is always a rational homology sphere~\cite{Lickorish97_IntroKnotThry}*{P.\,95}.
 \end{proof}

 Let me note in passing that examples of $\Z/2$ harmonic 1-forms on homology 3-spheres such that the corresponding branching set $Z$ is a link will appear elsewhere.

 \appendix
 \section{An algebraic property of $\mu$}
\label{Sect_AlgPtyMu}

The only purpose of this appendix to provide some details of the calculation  concerning the quadratic map $\mu$ used in the proof of \autoref{Lem_SW2PSL2RBijection}. 
More precisely, I claim that the diagram 
\[
\begin{CD}
	T_\C^*M @>\Cl>> \End_0(\slS)\\
	@V\chi(\cdot\wedge\cdot)VV @VV\mu V\\
	\Lambda^2T^*M\otimes \R i @>\Cl>> \End_0(\slS)
\end{CD}
\]
commutes, where the horizontal arrows represent the Clifford multiplication, the left vertical arrow represents a combination of the standard symplectic product on $\C\cong \R^2$ and the wedge-product, and the right vertical arrow is given by~\eqref{Eq_Mu}.

It suffices to show that the following diagram 
\begin{equation}
	\label{Eq_CommDiagrMu}
\begin{CD}
(\R^3)^*\otimes\C @>\Cl>> \End_0(\C^2)\\
@V\chi(\cdot\wedge\cdot)i\,VV @VV\mu V\\
\Lambda^2(\R^3)^*\otimes \R i @>\Cl>> \End_0(\C^2)
\end{CD}
\end{equation}
commutes. 
To this end, let $(e_1^*, e_2^*, e_3^*)$ be the standard basis of $(\R^3)^*$. 
Since the Clifford multiplication maps $e_j^*$ to $\eta_j$, which is defined by~\eqref{Eq_BasisSU2},  we have
\begin{equation*}
	b:=\sum_{j=1}^3 z_je_j^*\ \longmapsto
	\begin{pmatrix}
		z_3i \quad& -z_1-iz_2\\[5pt]
		z_1 - z_2i\quad & -z_3i
	\end{pmatrix}
	=:\Psi.
\end{equation*}
Then a straightforward computation yields 
\begin{equation*}
	\mu(\Psi) = 
	\begin{pmatrix}
	-\big (  z_1\bar z_2 - z_2\bar z_1\big )i \quad&  ( z_3\bar z_1 - z_1\bar z_3) i -z_3\bar z_2 + z_2\bar z_3\\[5pt]
	( z_3\bar z_1 - z_1\bar z_3)i + z_3\bar z_2 - z_2\bar z_3 \quad&  \big (  z_1\bar z_2 - z_2\bar z_1\big )i
	\end{pmatrix}.
\end{equation*}

Furthermore, writing $z_j= x_j + y_ji$, we obtain
\begin{equation*}
	\chi(b\wedge b) = \big ( x_1y_2 - x_2y_1 \big )\, e_1^*\wedge e_2^* + \big ( x_1y_3 - x_3y_1 \big )\, e_1^*\wedge e_3^* + \big ( x_2y_3 - x_3y_2 \big )\, e_2^*\wedge e_3^*.
\end{equation*}
Hence, 
\begin{equation*}
	\Cl\big ( \chi(b\wedge b)i  \big)=
	2\begin{pmatrix}
		-(x_1y_2-x_2y_1) \quad & x_1y_3-x_3y_1- (x_2y_3-x_3y_2)i\ \\[5pt]
		x_1y_3-x_3y_1+ (x_2y_3-x_3y_2)i \quad& x_1y_2-x_2y_1 \
	\end{pmatrix}.
\end{equation*}
Therefore, we obtain $\mu(\Psi) = \Cl\big (\chi(b\wedge b)i\,\big )$   by inspection.
This establishes the commutativity of~\eqref{Eq_CommDiagrMu}.

\bibliography{references}

\end{document}